\documentclass[12pt]{amsart}
\usepackage{amssymb}
\usepackage{amsmath}
\usepackage{amsthm} 
\usepackage{graphicx,subcaption}    
\usepackage[utf8]{inputenc}
\usepackage{amsfonts}
\usepackage{array}
\usepackage{graphicx}
\usepackage{hyperref}
\usepackage{xcolor}
\hypersetup{
	colorlinks   = true, 
	urlcolor     = blue, 
	linkcolor    = blue, 
	citecolor   = blue 
}

\numberwithin{equation}{section}
\newtheorem{theorem}{Theorem}[section]

\newtheorem{lemma}[theorem]{Lemma}

 \newtheorem{remark}[theorem]{Remark}
\newtheorem{corollary}[theorem]{Corollary} 
\newcommand{\R}{{\mathbb R}}  \newcommand{\Z}{{\mathbb Z}} \newcommand{\N}{{\mathbb N}}

\newcommand{\Cc}{{\mathbb C}}

 \author[Ulanovskii]{Alexander Ulanovskii}
 \address{Department of Mathematics and Physics, University of Stavanger, 4036 Stavanger, Norway}
 \email{alexander.ulanovskii@uis.no}

\author[Zlotnikov]{Ilya Zlotnikov}
\address{Department of Mathematical Sciences, Norwegian University of Science and Technology (NTNU), 7491 Trondheim, Norway} 
\address{
Erwin Schrödinger International Institute for Mathematics and Physics (ESI),
University of Vienna,
Vienna, Austria
}
\email{ilia.k.zlotnikov@ntnu.no}
\keywords{Gabor frames, Shift-invariant spaces, Uniqueness sets}
\title[Periodic Non-uniqueness Sets for Shift-invariant Spaces]{Periodic Non-uniqueness Sets for Shift-invariant Spaces and Parity-Based Obstructions to the Frame Property for  Gabor Systems}

\begin{document}
\begin{abstract}

The goal of this note is twofold. First, we provide explicit examples of periodic (though not necessarily lattice) sets that give rise to Gabor systems failing to form frames. Our constructions depend only on the parity of the window function $g$.

Second, for a wide range of finite-dimensional function spaces $V$ we show that $V$ contains a function $g$ such that a lattice of high density fails to generate a Gabor frame. In particular, we prove that the Gröchenig--Lyubarskii theorem is sharp in the finite-dimensional space of polynomials with Gaussian weight. More precisely, for every $N\in\N$ and every $\alpha,\beta>0$ satisfying $\alpha\beta=\frac{1}{N+1}$, we give an explicit algorithm for finding an even or odd polynomial $p$ of degree at most $N$ such that
$
\mathcal{G}(p(x)e^{-\pi x^2}, \alpha\mathbb{Z} \times \beta\mathbb{Z})
$
does not form a frame. The proofs are constructive, elementary, and based on linear algebra.

\end{abstract}

 \date{}\maketitle

\section{Introduction}

The fundamental problem in Gabor analysis is to determine what assumptions should be imposed on the function $g$ and discrete sets $\Lambda, \Gamma$ to ensure that the Gabor system
$$
\mathcal{G}(g, \Lambda \times \Gamma) = \left\{g(x-\lambda) e^{2\pi i \gamma x}, \quad \lambda \in \Lambda, \,\gamma \in \Gamma \right\}
$$
constitutes a frame for $L^2(\R).$

Formulated at this level of generality, the problem is known to be highly challenging even for a fixed function $g$. A complete answer is currently available only for the Gaussian kernel $g(x) = e^{-ax^2}, a >0$ (see \cite{MR1188007, MR1173117}), the Cauchy kernel $\frac{1}{x+i}$ (see \cite{MR4345947}), and the one-sided exponential $e^{-x}\chi_{[0, +\infty)}(x)$ (see \cite{BELOV2026101892}).

Imposing a lattice structure on at least one of the sets $\Lambda$ and $\Gamma$ places the problem in a significantly more accessible setting, where several complete and partial characterizations are available.

For a given function $g$ we define its frame set by $$\mathcal{F}(g) = \left\{(\alpha,\beta) \in \R^2_{>0} : \quad \mathcal{G}(g, \alpha \Z \times \beta \Z) \text{ is a frame for }L^2(\R)\right\}. $$

For example, the frame set has been completely characterized for totally positive functions, rational functions with poles in a common half-plane, indicator functions of intervals, one-sided exponentially decaying functions, and the first Haar function, see \cite{BELOV2026101892,MR4542702, MR3545108,  MR4722277, MR3763405, MR3053565, MR1955931}.

However, for many important generators, e.g., Hermite functions, B-splines, ratios of exponential polynomials, the  structure of the frame sets is more complicated, and only some sufficient conditions are available, see \cite{MR2529475, MR4832036, MR4865221}. These conditions allow us to identify some region $\mathcal{S}$ which is contained entirely inside the frame set $\mathcal{F}(g).$
In particular, for the $N$-th Hermite function or more generally, for polynomials of degree at most $N$ multiplied by a Gaussian weight, in \cite{MR2529475} it was proved that one can take 
$$
\mathcal{S}:= \left\{(\alpha,\beta)\in\R^2_{>0} : \alpha\beta < \frac{1}{N+1}\right\}.
$$

On the other hand, for every function $g$ the trivial necessary condition for $\mathcal{G}(g, \alpha\Z \times \beta \Z)$ to form a frame for $L^2(\R)$ is $\alpha \beta <1.$

Between these two regions, frame properties of  Gabor systems have been extensively studied for many families of generators, see e.g. \cite{ MR4887696, MR4793698, MR4849800, MR3027914}.
In particular, there is a series of results that indicate obstructions for lattices to generate a Gabor system possessing the frame property for Hermite functions and B-splines. 

For instance, using properties of the Zibulski--Zeevi matrix representation, in  \cite{MR3027914} it was proved that for every odd $g$ satisfying mild decay assumptions and $m \in \N,$ 
\begin{equation}
    \text{if } \alpha\beta = \frac{m}{m+1} \quad \text{then} \quad \mathcal{G}(g, \alpha \Z \times \beta \Z)\,\, \text{does not form a frame for} \,\, L^2(\R).
\end{equation}

In the present note, we continue our investigation of the obstructions to the frame property for the Gabor systems $\mathcal{G}(g, \alpha \Z \times \beta \Z)$ 
and $\mathcal{G}(g, \Lambda \times \Z)$ generated by regular or semi-regular sets. Our approach relies on the classical correspondence (see \cite{jan95}) between Gabor frames and the uniqueness property in shift‑invariant spaces.

Our results fall into two categories.
First, for a fixed even or odd generator $g$ satisfying mild decay assumptions, we construct explicit examples of the sets $\Lambda$ (including, in particular, certain lattices) of large density for which the Gabor system $\mathcal{G}(g, \Lambda \times \Z)$ does not form a frame, see Theorem~\ref{non_uniq_approx_lattice} and Corollary~\ref{cor:Gabor_even_odd} below.

Second, we show that for a collection of windows $g_1,\ldots,g_N$ satisfying suitable decay, linear-independence, and parity assumptions, one can construct periodic sets which are not uniqueness sets for $V^\infty(g_1,\ldots,g_N)$ and have density $2N$ in the odd case and $2N-1$ in the even case, see Theorem~3.5. Moreover, these sets already fail to be uniqueness sets for $V^\infty(g)$ for a suitable non-trivial function $g\in \operatorname{Span}(g_1,\ldots,g_N)$, see Corollary~3.6.

In addition, in Corollary~\ref{cor:sharp_hermite}, we show that for every $N\in\N$ and every $\alpha,\beta,a\in\R_{>0}$ satisfying $\alpha\beta=\frac{1}{N+1}$, there exists a polynomial $p=p(\alpha,\beta,a)$ such that the Gabor system
$\mathcal{G}(g,\alpha\Z\times\beta\Z)$
does not form a frame for $L^2(\R)$, where
$g(x)=p(x)e^{-a x^2}.$
Furthermore, the coefficients of $p$ can be computed explicitly, or approximated numerically with arbitrary precision, by solving a finite system of linear equations. In addition, the polynomial $p$ may be chosen to be even or odd according to the parity of $N$.
In particular, this shows that the boundary for the frame region obtained in~\cite{MR2529475} is sharp in the class of polynomials of degree at most $N$ multiplied by a Gaussian.

The paper is organized as follows. In Sec.~2 we recall the definition and some properties of shift-invariant spaces. In Sec.~3 we present our main results -- Theorems~\ref{non_uniq_approx_lattice}, \ref{t} and Corollaries~\ref{cor:Gabor_even_odd}, \ref{cor:AN_g}, \ref{cor:sharp_hermite}.
The proofs of these results are given in Sec.~4 and Sec.~5. 

\section{Preliminaries}
Throughout this paper we will assume that the generator function belongs to the continuous Wiener amalgam space $W_0$ which consists of continuous functions $g: \R \to \Cc,$ satisfying
\begin{equation}\label{wiener}\|g\|_{W_0} := \sum\limits_{k \in \Z} \|g\|_{L^\infty (k,k+1)} < \infty.\end{equation}
Given $g \in W_0$ and $1\leq p\leq \infty$, the shift-invariant space $V^p(g)$ is defined by 
$$
V^p(g)=\{f(x)=\sum\limits_{n\in\Z} c_n g(x-n), \quad \{c_n\} \in l^p(\Z)\}.
$$

In what follows, we always assume that the function $g$ has stable $\Z$-shifts, i.e. 
for every $p \in [1, \infty]$ there exist positive constants $A,B$ such that 
\begin{equation}\label{eq:stability}
  A\|\{c_n\}\|_{l^p} \leq \|f\|_{L^p(\R)} \leq B\|\{c_n\}\|_{l^p}, \quad \text{for every } f\in V^p(g).  
\end{equation}

This fundamental condition ensures that $V^p(g)$ is a closed subset of $L^p(\R).$ Let us briefly recall some facts concerning the stability of integer shifts, for more information and some generalizations we refer the reader to \cite{MR3336091, Jia1991, MR1850960, MR4950401}.
First, if~\eqref{eq:stability} is satisfied for some $p_0 \in [1, \infty]$ then it is also satisfied for all $p \in [1, \infty]$ simultaneously, see \cite{MR3336091}.
Second, stability of $\Z$-shifts of $g$ is equivalent to the assumption that there is no $x\in[0,1)$ such that
\begin{equation}\label{eq:stability_fourier}
    \hat{g}(x+n) = 0, \quad\text{for every } n\in \Z,
\end{equation}
where $\hat{g}$ stands for the Fourier transform of $g$ defined by
$$
\hat{g}(t) = \int\limits_{\R} g(x) e^{-2\pi i xt}\, dt.
$$

A set $\Lambda$ is called a uniqueness set (US) for the space $V^p(g)$ if 
$$
f(\lambda) = 0 \,\, \text{for every } \lambda \in \Lambda \quad \text{implies }\quad f\equiv 0.
$$

To prove that a given Gabor system does not form a frame, we use the following connection between Gabor systems and shift-invariant spaces, see e.g., \cite{MR4047939}.

\begin{lemma}\label{lem:gabor_frames_uniquenes_sets}
    Assume that $g \in W_0$ has stable integer shifts. Then if for some $x\in[0,1)$ the set $\Lambda +x$ is not a US for $V^\infty(g)$ then $\mathcal{G}(g, \Lambda \times \Z)$ does not form a frame for $L^2(\R).$
\end{lemma}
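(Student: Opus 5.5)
The plan is to argue by contraposition. Assume $\mathcal{G}(g,\Lambda\times\Z)$ is a frame for $L^2(\R)$, so its dual Gabor system, and in particular the analysis operator, is bounded below. We then show that for every $x\in[0,1)$ the set $\Lambda+x$ is a US for $V^\infty(g)$. The link between the two properties is the Zak transform, or equivalently a periodization in the frequency variable.

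\textbf{Step 1: rewrite the frame inequality.} For $f\in L^2(\R)$, the coefficients $\langle f, g(\cdot-\lambda)e^{2\pi i k\cdot}\rangle$, $k\in\Z$, are the Fourier coefficients of the $1$-periodic function
$$
F_\lambda(x)=\sum_{n\in\Z} f(x+n)\overline{g(x+n-\lambda)}.
$$
By Parseval, the frame inequality becomes
$$
A\|f\|_2^2\le \sum_{\lambda\in\Lambda}\int_0^1\Bigl|\sum_n f(x+n)\overline{g(x+n-\lambda)}\Bigr|^2dx .
$$
Fix $x$ and put $c_n=f(x+n)$. The inner sum is $\sum_n c_n\overline{g(x-\lambda+n)}$, which is the value at the point $\lambda-x$ of the function $\sum_n \overline{c_{-n}}\,g(\cdot-n)$ of $V(g)$ built from reflected coefficients (up to conjugation). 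So the frame lower bound says that sampling on $\Lambda-x$ is stable in an averaged-in-$x$, $\ell^2$ sense. The sign conventions ($\pm x$, reflecting $g$) must be tracked; since $x$ ranges over $[0,1)$ modulo $1$, this only matters for bookkeeping.

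\textbf{Step 2: pass to a pointwise statement in $x$.} Because $g\in W_0$, the map $x\mapsto$ (the sampling operator $\ell^2\to\ell^2(\Lambda)$ on $\Lambda+x$) is norm-continuous. This is the standard fact that the operator bounds in Ron--Shen/Janssen duality are given by the essential infimum over $x$ of the lower bound of a continuous family of operators. We will therefore obtain a uniform bound
$$
\|\{h(\lambda+x)\}_{\lambda}\|_{\ell^2(\Lambda)}\ge c\,\|h\|
$$
for all $h\in V^2(g)$ and \emph{every} $x$, not just almost every $x$. The continuity uses that $g$ is continuous with summable local sup-norms, and that $\Lambda$ is relatively separated; the latter is forced by the frame upper bound.

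\textbf{Step 3: upgrade from $V^2$ to $V^\infty$.} This is the main obstacle. Stable sampling in $\ell^2$ for the whole family of translates $\Lambda+x$ must be shown to imply uniqueness in $V^\infty(g)$. I would invoke the Beurling-type argument via weak limits (Gröchenig--Romero--Stöckler style). Stability in $V^2$ for all translates implies stability in $V^p$ for all $p$ via Wiener's lemma for the matrix $(g(\lambda+x-n))$, which is localized because $g\in W_0$. The $p=\infty$ lower bound then gives uniqueness directly. If the Wiener-lemma route is too heavy, I would fall back to citing the known equivalence from \cite{MR4047939}, which the paper itself refers to. The contrapositive of the resulting implication is exactly the statement of Lemma~\ref{lem:gabor_frames_uniquenes_sets}.
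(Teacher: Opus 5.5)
\textbf{There is a genuine gap, and it sits in a sign you dismissed as bookkeeping.} The paper does not prove this lemma; it quotes it from \cite{MR4047939}. Your route is the standard one behind that reference: periodize in time, read the frame bound as stable sampling on translates, pass from a.e.\ $x$ to every $x$ by continuity, then go from $V^2$ to $V^\infty$ via a Wiener-type lemma. Steps 2 and 3 are acceptable as sketched.

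The problem is in Step 1. With $h(y)=\sum_n\overline{c_{-n}}\,g(y-n)=\sum_m\overline{c_m}\,g(y+m)$ one gets $\overline{h(x-\lambda)}=\sum_m c_m\overline{g(x-\lambda+m)}$. So the inner sum is $\overline{h}$ evaluated at $x-\lambda$, not at $\lambda-x$. The frame bound therefore gives stable sampling of $V^2(g)$ on the sets $x-\Lambda$, i.e.\ on translates of $-\Lambda$. Letting $x$ run over $[0,1)$ modulo $1$ produces all translates, but it never turns $-\Lambda$ into $\Lambda$. Consequently the inequality you assert in Step 2 for $h(\lambda+x)$ does not follow. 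What follows is the same inequality for $h(x-\lambda)$, equivalently for $\Lambda+x$ in $V^2(\tilde g)$ with $\tilde g(t)=g(-t)$.

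\textbf{This cannot be patched, because the statement with $\Lambda+x$ and $V^\infty(g)$ fails for non-symmetric $g$ and $\Lambda$.} Take $g$ continuous with $g>0$ on $S=(0,\tfrac14)\cup(\tfrac12,1)$ and $g=0$ off $S$. Its integer translates have disjoint supports up to endpoints, so $g\in W_0$ has stable shifts. Let $\Lambda=\tfrac1{10}\Z\setminus S$.
\begin{itemize}
\item \emph{Not a US.} $g\in V^\infty(g)$ vanishes on $\Lambda=\Lambda+0$.
\item \emph{Frame operator.} Since $\operatorname{supp} g\subseteq[0,1]$, the frame operator of $\mathcal{G}(g,\Lambda\times\Z)$ is multiplication by $G(t)=\sum_{\lambda\in\Lambda}|g(t-\lambda)|^2$.
\item \emph{Covering.} Every $t$ admits some $\lambda\in\Lambda$ with $t-\lambda\in[0.05,0.2]\cup[0.55,0.95]$. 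Indeed, $[t-0.95,t-0.55]$ contains at least four consecutive points of $\tfrac1{10}\Z$. All of them lie in $S$ only if they are $0.6,\dots,0.9$, which forces $t\in(1.45,1.55)$. Then $1.3$ or $1.4$ lies in $[t-0.2,t-0.05]$.
\item \emph{Conclusion.} $G$ is bounded above and below, so the system is a frame, contradicting the lemma as written.
\end{itemize}

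The correct lemma reads: if $x-\Lambda$ is not a US for $V^\infty(g)$ for some $x$, then $\mathcal{G}(g,\Lambda\times\Z)$ is not a frame. Your argument, with the sign fixed, proves exactly this. It implies the stated version whenever $-\Lambda=\Lambda$, or when $g$ is even or odd, since then $V^\infty(g(-\cdot))=V^\infty(g)$. Both conditions hold in every application in the paper, so its results are unaffected.
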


Below, we also consider a natural extension of shift-invariant spaces to the case of multiple generators.

Let $g_1,...,g_N \in W_0$ with $ N>1.$ The shift-invariant space $V^\infty(g_1,...,g_N)$  is defined by
$$
V^\infty(g_1,\dots,g_N)
=
\left\{
 f(x)=\sum_{j=1}^N\sum_{n\in\mathbb Z} c_{j,n}g_j(x-n),
\, c_j=\{c_{j,n}\}_{n\in\mathbb Z}\in \ell^\infty(\mathbb Z),
\ 1\leq j\leq N
\right\}.
$$
Such spaces play a fundamental role in signal processing, approximation theory, and wavelet analysis.

In what follows, we assume that each generator $g_j$, $j=1,\dots,N$, satisfies the stability condition~\eqref{eq:stability}. Moreover, we assume that the subspaces $V^{\infty}(g_j)$ are mutually independent in the sense that every element $f\in V^\infty(g_1,\dots,g_N)$ admits a unique decomposition of the form
$$
f(x)=\sum_{j=1}^N f_j(x),
\qquad
f_j\in V^\infty(g_j).
$$

The following lemma provides a sufficient condition for this property; see \cite{MR1850960}.

\begin{lemma}\label{lem:mutual_stable}
Assume that $g_1,\dots,g_N\in W_0$. Then the subspaces
$$
V^\infty(g_j), \qquad j=1,\dots,N,
$$
are mutually independent provided that, for every $\xi\in[0,1]$, the vectors
$$
\left(\widehat{g_k}(\xi+m)\right)_{m\in\Z},
\qquad k=1,\dots,N,
$$
are linearly independent.
\end{lemma}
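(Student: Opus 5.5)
The plan is to reduce mutual independence to a pointwise statement about the Fourier transforms, fibre by fibre in $\xi$, and then read off the vanishing of each component from the assumed linear independence of the vectors $(\widehat{g_k}(\xi+m))_{m}$. Suppose $\sum_{j=1}^N f_j\equiv 0$ with $f_j=\sum_n c_{j,n}g_j(\cdot-n)\in V^\infty(g_j)$. It suffices to show every $f_j\equiv 0$: uniqueness of the decomposition follows from this applied to differences, and by the stability assumption \eqref{eq:stability} with $p=\infty$ this is equivalent to $c_j=0$ for all $j$.

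Since $c_j\in\ell^\infty$ only, the argument has to be distributional. The tempered distribution $\hat f_j$ equals $C_j(\xi)\widehat{g_j}(\xi)$, where $C_j(\xi)=\sum_n c_{j,n}e^{-2\pi i n\xi}$ is a $1$-periodic distribution. This product is meaningful when $\widehat{g_j}$ is smooth enough. For $g\in W_0$ it is only continuous, so I would avoid multiplying distributions directly. Instead I would test against functions: fix $\varphi\in C_c^\infty$ supported in a small interval $I$ of length $<1$ around a point $\xi_0$. Then periodize, i.e. use the bracket/Zak-type identity
\[
\int f_j\,\overline{\check\psi}=\big\langle C_j,\ \textstyle\sum_m \widehat{g_j}(\cdot+m)\overline{\psi(\cdot+m)}\big\rangle_{\mathbb T},
\]
which holds for Schwartz $\psi$ and is justified by the $W_0$ decay of $g_j$, giving absolutely summable sums.

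The relation $\sum_j \hat f_j=0$ then says the following. For every family $\psi$, the $\mathbb T$-pairing of the vector $(C_j)_j$ against the vector of periodizations vanishes. Equivalently, in the fibre over $\xi\in\mathbb T$, $\sum_j C_j(\xi)\,\widehat{g_j}(\xi+m)=0$ for all $m$, in the distributional sense on $\mathbb T$.

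The key step is local inversion. Fix $\xi_0$. By linear independence, choose indices $m_1,\dots,m_N$ such that the $N\times N$ matrix $G(\xi)=(\widehat{g_k}(\xi+m_l))$ is invertible at $\xi_0$. By continuity of $\widehat{g_k}$ it stays invertible on a neighbourhood $U$, with continuous inverse. One subtlety is that linear independence of infinite vectors does give a nonsingular finite minor, since a finite-rank argument on finitely many vectors works.

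Multiplying the system by $G(\xi)^{-1}$ would give $C_j=0$ on $U$. However, multiplying a distribution by a merely continuous function is again the obstacle. To handle it, I would first regularize: convolve the sequences in frequency, i.e. multiply $C_j$ by a smooth cutoff in $n$. Alternatively, and more simply, approximate the continuous entries of $G^{-1}$ uniformly by trigonometric polynomials $P$ with $P G$ close to the identity and invertible. Then note that multiplying the identity by trigonometric polynomials corresponds to finite combinations of integer shifts of the $f_j$, which stay in the spaces and preserve the relation. Finally, use a Neumann-series argument in the Wiener algebra. This requires $\widehat{g_k}(\cdot+m)$ to lie in a Wiener-type algebra of absolutely convergent Fourier series on $\mathbb T$, which is where I expect the main difficulty.

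If this fails, I would cite the reference \cite{MR1850960}, as the paper does.

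Once $C_j=0$ near every $\xi_0$, a partition of unity on $\mathbb T$ gives $C_j=0$, hence $c_j=0$ and $f_j=0$.
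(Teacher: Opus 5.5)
Your setup is sound. It suffices to show that $\sum_j f_j=0$ forces every $c_j=0$. The periodization identity is correctly justified. Testing with $\psi$ supported in $I+m$, $|I|<1$, does give the fibre relations $\sum_j\langle C_j,\widehat{g_j}(\cdot+m)\theta\rangle=0$, and a nonsingular minor $G(\xi_0)$ does exist.

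The gap is the inversion step, which is the whole content of the lemma, and the mechanism you propose does not work as stated. Uniformly approximating $G^{-1}$ by trigonometric polynomials $P$ only makes $I-PG$ small in sup norm near $\xi_0$. A Neumann series you can control by hand needs smallness in the Wiener norm $\sum_n|\hat h(n)|$, and uniform smallness does not give that. That uniform smallness nevertheless suffices, because the spectral radius in $A(\mathbb T)$ is the sup norm, is exactly Wiener's lemma in Gelfand-theoretic form, which you never invoke. The other suggestion, damping $c_{j,n}$ by a weight in $n$, fails outright. It convolves $C_j$ on $\mathbb T$, which does not commute with multiplication by the non-constant $\widehat{g_j}(\cdot+m)$, so the fibre relations are lost.

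Three ingredients close the argument.
(a) Since $c_j\in\ell^\infty$, $C_j$ is a pseudo-measure, i.e.\ an element of $A(\mathbb T)^*$ via $\langle C_j,h\rangle=\sum_n c_{j,n}\hat h(n)$. Hence it can be multiplied by any element of $A(\mathbb T)$, and smoothness of the multipliers is irrelevant.
(b) For smooth $\theta$ supported in an arc of length $<1$, $\widehat{g_k}(\cdot+m)\theta\in A(\mathbb T)$. This is not the "main difficulty" you anticipate. Its $n$-th Fourier coefficient is $\int g_k(x-n)\overline{\check\psi(x)}\,dx$ with $\psi=\overline{\theta(\cdot-m)}$, which is summable in $n$ by exactly the estimate behind your periodization identity.
(c) By Wiener's lemma in local form, $\det G$ is locally in $A(\mathbb T)$ and nonzero at $\xi_0$, so $1/\det G$ agrees near $\xi_0$ with an element of $A(\mathbb T)$. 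Thus $G^{-1}=\operatorname{adj}(G)/\det G$ has entries locally in $A(\mathbb T)$, and $C=(CG)G^{-1}=0$ near $\xi_0$ becomes a legitimate computation in the $A(\mathbb T)$-module of pseudo-measures.

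With (a)--(c), your route gives a self-contained proof of exactly the injectivity the lemma needs. The paper offers no argument of its own. It cites \cite{MR1850960}, where the hypothesis is the Jia--Micchelli criterion for $\ell^\infty$-stability of the joint system $\{g_k(\cdot-n)\}_{k,n}$, i.e.\ $\|\sum_{k,n}c_{k,n}g_k(\cdot-n)\|_\infty\ge A\max_k\|c_k\|_{\ell^\infty}$, and mutual independence follows immediately. Since your write-up falls back on that same citation, the key step is, as it stands, not proved.
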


For brevity, we will say that the generators $g_1,\dots,g_N$ satisfy the mutual independence condition if all of the above assumptions are satisfied.

\section{Main Results}
\subsection{Non-uniqueness sets for even and odd generators}
Below, by $\lfloor s \rfloor$ we denote the greatest integer that is less than or equal to $s\in \R.$

 Given any $n\in \N$ and any points $0<x_1<...<x_{n} <n$, we denote by $F_{n}$ the symmetric set
\begin{equation}\label{eq:set_F_def}
   F_n=F_n(x_1,...,x_n):=\{\pm x_1,...,\pm x_n\}=\{x_1,...,x_n\}\cup \{-x_1,...,-x_n\}. 
\end{equation}
We also set $F_0:=\emptyset.$

Let $k\in\mathbb N$. We now introduce three sets
\begin{equation}\label{eq:set_Lambda_def}
\Lambda_k:=\left(F_{\lfloor\frac{k}{2}\rfloor} \cup \left\{0,\frac{k}{2}\right\}\right)+k\Z;
\end{equation}
\begin{equation}\label{eq:set_Gamma_def}
\Gamma_k:= \left(F_{(k-1)/2}\cup \left\{\frac{k}{2}\right\}\right)+k\Z,\quad k\in 2\N-1; 
\end{equation}
\begin{equation}\label{eq:set_Theta_def}
\Theta_k:=F_{k/2}+k\Z,\quad k\in 2\N.
\end{equation}

Observe that every set above is $k$-periodic, and that $\#(\Lambda_k\cap [0,k))=2 \lfloor k/2 \rfloor+2$ and $\#(\Gamma_k\cap [0,k))= \#(\Theta_k\cap [0,k))=k$.

\begin{theorem}\label{non_uniq_approx_lattice}
  Assume that $g\in W_0$ has stable integer shifts.
  Let $ k\in \N.$
  
$(i)$ If $g$ is odd, then no set $\Lambda_k$   \text{is a US for } $V^{\infty}(g).$
      
$(ii)$ Assume that $g$ is even. If $k$ is odd, then  no set $\Gamma_k$   \text{is a US for } $V^{\infty}(g).$ 
      If $k$ is even, then  no set $\Theta_k$   \text{is a US for } $V^{\infty}(g).$

\end{theorem}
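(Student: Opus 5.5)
The plan is to build, in each case, a nonzero $f\in V^\infty(g)$ from coefficients $\{c_n\}$ that are $k$-periodic or $k$-antiperiodic and symmetric or antisymmetric. The parity of $g$ then forces $f$ to be odd or even and $k$-(anti)periodic. These symmetries make most of the vanishing conditions automatic, and the rest form a homogeneous linear system with more unknowns than equations. Throughout, $f\not\equiv 0$ follows from stability~\eqref{eq:stability} with $p=\infty$: any nonzero bounded $\{c_n\}$ gives $\|f\|_\infty\ge A\|c\|_\infty>0$.

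\emph{Setup.} Let $\varepsilon\in\{1,-1\}$ and suppose $c_{n+k}=\varepsilon c_n$ for all $n$. Then $f(x)=\sum_n c_ng(x-n)$ satisfies $f(x+k)=\varepsilon f(x)$. Moreover
$$f(x)=\sum_{j=0}^{k-1}c_j G_\varepsilon(x-j),\qquad G_\varepsilon(x)=\sum_{m\in\Z}\varepsilon^m g(x-mk),$$
where $G_\varepsilon$ is continuous since $g\in W_0$ (the series converges uniformly on compacts). Suppose also $c_{-n}=\sigma c_n$ with $\sigma=\pm1$. Then $f(-x)=\sum_n c_n g(-x-n)=\pm\sum_n c_{-n}g(-x+n)$, with the sign given by the parity of $g$. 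So $f$ is odd or even according to the product of $\sigma$ and the parity of $g$. Since $f$ is (anti)periodic with period $k$ and the sets are symmetric, $f$ vanishes on the whole set as soon as it vanishes on its points in $[0,k/2]$. Vanishing at $-x_i$ follows from vanishing at $x_i$ by parity of $f$.

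\emph{Case (i), $g$ odd.} I would take $\varepsilon=\sigma=1$. Then $f$ is odd and $k$-periodic, so $f(0)=0$. Also $f(k/2)=f(-k/2)=-f(k/2)$, so $f(k/2)=0$. The remaining conditions are $f(x_i)=0$ for $i=1,\dots,\lfloor k/2\rfloor$. The free parameters are $k$-periodic symmetric sequences, determined by $c_0,\dots,c_{\lfloor k/2\rfloor}$, which is $\lfloor k/2\rfloor+1$ unknowns. So a nonzero solution exists.

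\emph{Case (ii), $g$ even, $k$ even ($\Theta_k$).} Take $\varepsilon=\sigma=1$, so $f$ is even and $k$-periodic. We need $f(x_i)=0$ for $i=1,\dots,k/2$, which is $k/2$ equations in $k/2+1$ unknowns $c_0,\dots,c_{k/2}$. Again a nonzero solution exists.

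\emph{Case (ii), $g$ even, $k$ odd ($\Gamma_k$).} This is the step I expect to be the obstacle. With periodic coefficients, symmetric or antisymmetric choices both give a square system, so I would switch to antiperiodic coefficients: $\varepsilon=-1$, $\sigma=1$. Then $f$ is even with $f(x+k)=-f(x)$. This gives $f(k/2)=-f(-k/2)=-f(k/2)$, so $f(k/2)=0$ for free. The constraints $c_{-j}=c_j$ and $c_{-j}=-c_{k-j}$ combine to $c_{k-j}=-c_j$. This leaves $c_0$ free together with one of each pair $\{j,k-j\}$, $1\le j\le (k-1)/2$, giving $(k+1)/2$ unknowns. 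The remaining equations are $f(x_i)=0$, $i=1,\dots,(k-1)/2$, so a nonzero solution exists.

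Finally, the zeros of $f$ are invariant under $x\mapsto x+k$ up to sign, so $f$ vanishes on the whole set. Stability gives $f\not\equiv0$, which proves the theorem.
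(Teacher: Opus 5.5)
Your proof is correct and is essentially the paper's argument. The paper's combinations $a_0 f_k+\sum_j a_j\bigl(f_k(\cdot-j)+f_k(\cdot-k+j)\bigr)+a_{k/2}f_k(\cdot-k/2)$ and $a_0\varphi_k+\sum_j a_j\bigl(\varphi_k(\cdot-j)-\varphi_k(\cdot-k+j)\bigr)$, with $f_k=\sum_n g(\cdot-kn)$ and $\varphi_k=\sum_n(-1)^n g(\cdot-kn)$, correspond exactly to your $k$-periodic symmetric and $k$-antiperiodic symmetric coefficient sequences, with the same unknown count and the same use of stability to get $f\not\equiv 0$ (one cosmetic slip: in $f(-x)=\pm\sum_n c_{-n}g(-x+n)$ the sign belongs to the step $g(-x+n)=\pm g(x-n)$, not to the reindexing).
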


Combining this with Lemma~\ref{lem:gabor_frames_uniquenes_sets}, we obtain the following result for Gabor systems. We formulate it in the setting of Theorem~\ref{non_uniq_approx_lattice}.
\begin{corollary}\label{cor:Gabor_even_odd}Assume that $g\in W_0$ has stable integer shifts, and let $k\in \N.$
\begin{enumerate}
    \item[(i)] If $g$ is odd, then  $\mathcal{G}(g, \Lambda_k\times\Z)$ is not a frame for $L^2(\R)$ for any $\Lambda_k$;    
    \item[(ii)]  If $g$ is even, then  $\mathcal{G}(g, \Gamma_k\times\Z)$ is not a frame for $L^2(\R)$ for $ k \in 2\N-1,$ and $\mathcal{G}(g, \Theta_k\times\Z)$ is not a frame for $L^2(\R)$ for $ k \in 2\N$.
\end{enumerate}
\end{corollary}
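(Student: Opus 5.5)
The corollary is a direct combination of Theorem~\ref{non_uniq_approx_lattice} with Lemma~\ref{lem:gabor_frames_uniquenes_sets}. The plan is to apply the lemma with the admissible shift $x=0\in[0,1)$. Then the only hypothesis left to verify is that the set itself, $\Lambda_k$, $\Gamma_k$ or $\Theta_k$, is not a uniqueness set for $V^\infty(g)$, and that is exactly what the theorem supplies.

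\textbf{Case (i).} Let $g\in W_0$ be odd with stable integer shifts, and let $\Lambda_k$ be any set of the form \eqref{eq:set_Lambda_def}, for any admissible choice of points $x_1,\dots,x_{\lfloor k/2\rfloor}$. By Theorem~\ref{non_uniq_approx_lattice}(i), $\Lambda_k=\Lambda_k+0$ is not a US for $V^\infty(g)$. Since the hypotheses of Lemma~\ref{lem:gabor_frames_uniquenes_sets} are precisely $g\in W_0$ with stable integer shifts, the lemma applies with $\Lambda=\Lambda_k$ and $x=0$. It yields that $\mathcal{G}(g,\Lambda_k\times\Z)$ is not a frame for $L^2(\R)$.

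\textbf{Case (ii).} Let $g$ be even. For odd $k$ we invoke the first half of Theorem~\ref{non_uniq_approx_lattice}(ii) for $\Gamma_k$. For even $k$ we invoke the second half for $\Theta_k$. In each case the set (shifted by $x=0$) is not a US for $V^\infty(g)$, and Lemma~\ref{lem:gabor_frames_uniquenes_sets} again gives the failure of the frame property for $\mathcal{G}(g,\Gamma_k\times\Z)$, respectively $\mathcal{G}(g,\Theta_k\times\Z)$.

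There is no genuine obstacle here: all the work sits in Theorem~\ref{non_uniq_approx_lattice}. The only points to check are that the standing assumptions on $g$ in the corollary coincide with those of the lemma, and that the shift $x=0$ is permitted. Both hold verbatim.
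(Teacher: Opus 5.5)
Your proposal is correct and matches the paper exactly. The paper obtains the corollary by combining Theorem~\ref{non_uniq_approx_lattice} with Lemma~\ref{lem:gabor_frames_uniquenes_sets}, which is what you do, with the shift $x=0$ and hypotheses that coincide verbatim.
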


As a special case, we recover one of the results from \cite{MR3027914}:
 \begin{corollary}\label{cor:lyub_nes}
 Given $\alpha, \beta > 0$. Let $g \in W_0$ be an odd function with stable $\beta\Z$-shifts and $m\in\N$.
 If $\alpha\beta=\frac{m}{m+1}$ then the system $\mathcal{G}(g, \alpha \Z \times \beta \Z)$  does not form a frame for $L^2(\R).$
 \end{corollary}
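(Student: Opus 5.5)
The plan is to reduce Corollary~\ref{cor:lyub_nes} to Corollary~\ref{cor:Gabor_even_odd}(i) in two steps: rescale so that the modulation lattice becomes $\Z$, and then check that the rescaled translation lattice is contained in a set of the form $\Lambda_k$ with $k=m$.

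\emph{Step 1: normalize $\beta=1$.} Consider the unitary dilation $Uf(x)=\beta^{-1/2}f(x/\beta)$ on $L^2(\R)$. A direct computation gives
$$U\bigl[g(\cdot-\alpha n)e^{2\pi i \beta l\,\cdot}\bigr](x)=\beta^{-1/2}\,\tilde g(x-\alpha\beta n)\,e^{2\pi i l x},\qquad \tilde g(x):=g(x/\beta).$$
Hence $U$ maps $\mathcal{G}(g,\alpha\Z\times\beta\Z)$ onto a constant multiple of $\mathcal{G}(\tilde g,\alpha\beta\Z\times\Z)$. Since unitary maps preserve the frame property, it suffices to show that $\mathcal{G}(\tilde g,\tfrac{m}{m+1}\Z\times\Z)$ is not a frame. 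We check the hypotheses on $\tilde g$:
\begin{itemize}
\item $\tilde g\in W_0$, since a dilation of a $W_0$ function stays in $W_0$.
\item $\tilde g$ is odd.
\item $\tilde g$ has stable $\Z$-shifts, because $\tilde g(x-n)=g((x-n)/\beta)$, so stability of the $\Z$-shifts of $\tilde g$ is exactly stability of the $\beta\Z$-shifts of $g$ after the change of variables $x\mapsto x/\beta$.
\end{itemize}

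\emph{Step 2: embed $\tfrac{m}{m+1}\Z$ into some $\Lambda_m$.} The set $\Lambda:=\tfrac{m}{m+1}\Z$ is $m$-periodic. Modulo $m$ it consists of the $m+1$ points $x_j=\tfrac{jm}{m+1}$, $j=0,\dots,m$. The reflection $j\mapsto m+1-j$ sends $x_j$ to $m-x_j\equiv -x_j \pmod m$, so $\Lambda$ is symmetric. Representing its points in $(-m/2,m/2]$, we get the point $0$ together with symmetric pairs $\pm x_j$, where $x_j=\tfrac{jm}{m+1}$ and $1\le j\le \lfloor m/2\rfloor$. If $m$ is odd, we also get the point $m/2$, which corresponds to $j=(m+1)/2$.

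Set $n=\lfloor m/2\rfloor$. The numbers $x_1<\dots<x_n$ are positive, and the largest satisfies
$$x_n=\frac{nm}{m+1}<n,$$
so they are admissible in \eqref{eq:set_F_def}. Therefore
$$\Lambda\subseteq \bigl(F_n(x_1,\dots,x_n)\cup\{0,m/2\}\bigr)+m\Z=\Lambda_m.$$
When $m$ is even, the point $m/2$ is an extra element of $\Lambda_m$ that does not lie in $\Lambda$, which is harmless since we only need an inclusion.

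\emph{Step 3: conclude.} By Theorem~\ref{non_uniq_approx_lattice}(i), $\Lambda_m$ is not a US for $V^\infty(\tilde g)$. So there is a nonzero $f\in V^\infty(\tilde g)$ vanishing on $\Lambda_m$, and hence also on the subset $\Lambda$. Thus $\Lambda$ is not a US, and Lemma~\ref{lem:gabor_frames_uniquenes_sets} with $x=0$ shows that $\mathcal{G}(\tilde g,\Lambda\times\Z)$ is not a frame. (Equivalently, one can argue directly from Corollary~\ref{cor:Gabor_even_odd}(i) together with the monotonicity of non-uniqueness under passing to subsets.) Undoing $U$ gives the claim.

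The only delicate point is the bookkeeping in Step 2: checking the admissibility constraint $x_n<n$ required in \eqref{eq:set_F_def}, and handling the parity of $m$ correctly. The remaining steps are routine.
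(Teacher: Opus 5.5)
Your proof is correct and follows the paper's route. You rescale to $\mathcal{G}(\tilde g,\frac{m}{m+1}\Z\times\Z)$, put $\frac{m}{m+1}\Z$ inside a set $\Lambda_k$, and then apply Theorem~\ref{non_uniq_approx_lattice}(i) together with Lemma~\ref{lem:gabor_frames_uniquenes_sets}.

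For even $m$, your choice $k=m$, $x_j=\frac{jm}{m+1}$ is exactly the paper's. For odd $m$, the paper switches to $k=2m$ with $F_m=\{\pm j\frac{m}{m+1}\}_{j=1}^m$. You keep $k=m$, and this works just as well. In fact $\Lambda_m=\frac{m}{m+1}\Z$ exactly in that case, because $\frac m2=\frac{m+1}{2}\cdot\frac{m}{m+1}$. So your bookkeeping is slightly more uniform than the paper's.

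One correction in Step 1. Since $\tilde g(x-n)=g\bigl(\tfrac{x}{\beta}-\tfrac{n}{\beta}\bigr)$, stability of the $\Z$-shifts of $\tilde g$ is equivalent to stability of the $\beta^{-1}\Z$-shifts of $g$, not the $\beta\Z$-shifts. In Fourier terms, you need $\hat g$ not to vanish on any whole coset $\eta+\beta\Z$. For $\beta\neq 1$ this is a different condition and does not follow from the stated one in general (e.g.\ for $\beta=2$).

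The paper's proof passes over the same point with ``Note that $g_\beta$ has stable $\Z$-shifts''. So the hypothesis of the corollary should be read as stable $\beta^{-1}\Z$-shifts, and with that reading your argument is complete.
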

 \begin{proof}
     Note that it suffices to prove the corollary for the system $\mathcal{G}(g_{\beta}, \alpha \beta \Z \times \Z)$, where $g_{\beta}(x):=g(\beta^{-1}x)$ is an odd function. 

     Note that $g_{\beta}$ has stable $\Z$-shifts.
     By Lemma~\ref{lem:gabor_frames_uniquenes_sets}, we are done if we can show that $ \frac{m}{m+1}\Z$ is not a US for $V^{\infty}(g_{\beta})$.
     
     Assume that $m\in 2\Z$. We use the definition of $\Lambda_k$ in \eqref{eq:set_Lambda_def} where $k=m$ and $$F_{m/2}:=\left\{\pm j\frac{m}{m+1}: j=1,\dots,m/2\right\}.$$ Clearly, $$\Lambda_m=(\{0,m/2\}\cup F_{m/2})+m\Z \supset \frac{m}{m+1}\Z.$$ Now, the result\footnote{In fact, in this case ($m\in 2\Z$), we even get slightly more: $m(\Z+1/2) \cup \frac{m}{m+1}\Z$ is not a US for $V^\infty(g).$} follows from Lemma~\ref{lem:gabor_frames_uniquenes_sets} and  Theorem~\ref{non_uniq_approx_lattice} (i).

If $m\in 2\Z+1,$ we set $k:=2m\in 2\Z$ and  $F_{k/2}:=\{\pm j\frac{m}{m+1}\}_{j=1}^m.$ It is easy to check that $$\left(\{0, k/2\} \cup F_{k/2} )\right) +k\Z = \left\{j\frac{m}{m+1}\right\}_{j=0}^{2m+1}+2m\Z=\frac{m}{m+1}\Z.$$By Theorem~\ref{non_uniq_approx_lattice} (i),  $\frac{m}{m+1}\Z$ is not a US for $V^\infty(g).$
\end{proof}

\subsection{ Non-uniqueness sets for shift-invariant spaces with multiple generators}

In \cite[Theorem 5]{MR1756138}, it was shown, in particular, that a necessary condition for a set $\Lambda$ to be a sampling set for $V^{\infty}(g_1,\dots,g_N)$ is that its density be at least $N$. 
Moreover, for every $\varepsilon>0$ there exists a lattice of density $N-\varepsilon$ that is not a US for $V^\infty(g_1,\dots,g_N)$. 

Below, we show that if the windows $g_1,\dots,g_N$ are all even or all odd, then one can find a lattice whose density is almost twice the critical density and which still fails to be  a uniqueness set for $V^{\infty}(g_1,\dots,g_N)$. More precisely, the density can be taken to be $2N-1$ in the even case and $2N$ in the odd case.

For any $N\in\N, N\geq 2,$ and any points $0<x_1< \dots <x_{N-1}<\frac{1}{2},$ let us introduce two sets: $$A_N:=A_N(x_1,\dots,x_{N-1}):=\{\pm x_1,\dots,\pm x_{N-1}\}\cup\left\{0,\frac{1}{2}\right\}+\Z;$$$$B_N:=\{\pm x_1,\dots,\pm x_{N-1}\}\cup\left\{\frac{1}{2}\right\}+\Z.$$

\begin{remark}\label{rem:lattice}
    Note that, for a suitable choice of the points $(x_1,\dots,x_{N-1})$, the sets $A_N$ and $B_N$ become lattices:
    $$A_N = \frac{\Z}{2N}, \quad x_j = \frac{j}{2N}, \,\, j=1,\dots, N-1,$$
    $$B_N = \frac{\Z+\frac{1}{2}}{2N-1}, \quad x_j = \frac{1}{4N-2}+\frac{j-1}{2N-1}, \,\, j=1,\dots, N-1.$$
\end{remark}

Our first result is as follows.

\begin{theorem}\label{t} Let  $g_1,...,g_N\in W_0, \, N\geq 2,$ satisfy the mutual independence condition.

$(i)$ If every generator $g_j$ is odd, then no set $A_N$ is a uniqueness set for $V^\infty(g_1,\dots,g_N)$.

$(ii)$ If every generator $g_j$ is even, then no set $B_N$ is a uniqueness set for $V^\infty(g_1,\dots,g_N)$.\end{theorem}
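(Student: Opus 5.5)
The plan is to build a nonzero element of $V^\infty(g_1,\dots,g_N)$ with coefficient sequences of a special symmetric, periodic form, so that the vanishing conditions on $A_N$ (resp. $B_N$) reduce to a small homogeneous linear system. That system will have more unknowns than independent equations, hence a nontrivial solution.

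\textbf{Case (i).} Take $f(x)=\sum_{j=1}^N\sum_{n\in\Z} c_{j,n} g_j(x-n)$ with $c_{j,n}=a_j(-1)^n$, where $a_1,\dots,a_N\in\Cc$ are to be chosen. Then $f(x+1)=-f(x)$. Moreover, since each $g_j$ is odd,
$$f(-x)=\sum_{j}a_j\sum_n (-1)^n g_j(-x-n)=-\sum_j a_j\sum_n(-1)^{n} g_j(x+n)=-f(x).$$
So $f$ is odd and $1$-antiperiodic. Consequently:
\begin{itemize}
\item $f(0)=0$ automatically, because $f$ is odd;
\item $f(1/2)=-f(-1/2)=f(1/2-1)\cdot(-1)\cdot(-1)$, and combining oddness with antiperiodicity gives $f(1/2)=-f(-1/2)=f(1/2)$. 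This is a tautology, so the value at $1/2$ needs more care. Rewriting, $f(-1/2)=-f(1/2)$ by antiperiodicity and $f(-1/2)=-f(1/2)$ by oddness, so there is no constraint. To force $f(1/2)=0$ I therefore impose it as one linear equation on $a$;
\item $f(-x_i)=-f(x_i)$ by oddness, so $f(\pm x_i+n)=0$ for all $n$ as soon as $f(x_i)=0$.
\end{itemize}
The conditions are then $f(x_i)=0$ for $i=1,\dots,N-1$ and $f(1/2)=0$, which is $N$ homogeneous equations in $N$ unknowns. That count is too tight, so I switch to even coefficients in this case.

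Set instead $c_{j,n}=a_j$ for all $n$. Then $f$ is $1$-periodic and odd, so $f(0)=0$, and $f(1/2)=-f(-1/2)=-f(1/2)$ gives $f(1/2)=0$ automatically. Only the $N-1$ equations $f(x_i)=0$ remain, in $N$ unknowns $a_j$, so a nonzero $a$ exists. Mutual independence (Lemma~\ref{lem:mutual_stable}) and stability ensure $f\not\equiv0$: by uniqueness of the decomposition some $f_j=a_j\sum_n g_j(\cdot-n)\neq0$, since stable shifts make the map from coefficients to functions injective on $\ell^\infty$. The resulting $f$ vanishes on $A_N$.

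\textbf{Case (ii).} For even $g_j$, take $c_{j,n}=a_j(-1)^n$. Then $f$ is even and $f(x+1)=-f(x)$. This gives $f(1/2)=-f(-1/2)=-f(1/2)$, so $f(1/2)=0$ automatically. Evenness plus antiperiodicity reduce vanishing on $\pm x_i+\Z$ to the $N-1$ equations $f(x_i)=0$. With $N$ unknowns there is a nonzero solution, and nontriviality follows as before because the coefficient sequence $a_j(-1)^n$ is a nonzero $\ell^\infty$ sequence.

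\textbf{Main obstacle.} The only delicate step is the nontriviality argument: deducing $f\ne0$ from $a\neq0$. This requires that the map $(c_j)\mapsto\sum_j f_j$ be injective on $\ell^\infty$. I will derive that injectivity from the mutual independence assumption together with the stability estimate \eqref{eq:stability} with $p=\infty$ for each $g_j$.
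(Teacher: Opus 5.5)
Your final argument is correct and is the paper's proof: in (i) the constant-coefficient choice gives $f=\sum_j a_jF_j$ with $F_j=\sum_n g_j(\cdot-n)$, and in (ii) the alternating choice gives $f=\sum_j a_j\Phi_j$. In both cases $N-1$ equations in $N$ unknowns yield a nontrivial combination, and nontriviality of $f$ follows from stability of each $g_j$ plus the unique-decomposition property, exactly as in the paper. You should delete the abandoned antiperiodic attempt in (i); its garbled computation of $f(1/2)$ contributes nothing to the proof.
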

Theorem  \ref{t} yields the following corollary, showing that the sets $A_N$  and $B_N$  already fail to be uniqueness sets for a suitably chosen generator in 
${\rm Span}(g_1\dots,g_N)$.

\begin{corollary} \label{cor:AN_g}
    Let $g_1,\dots,g_N, N\geq 2,$ satisfy the assumptions of Theorem~\ref{t}. 
    
$(i)$ If every $g_j$ is odd, then for every set $A_N$ there is a non-trivial generator $g\in {\rm Span}(g_1,\dots,g_N)$ such that $A_N$ is not a US for $V^\infty(g)$. 

    $(ii)$ If every $g_j$ is even, then for every set $B_N$ there is a non-trivial generator $g\in {\rm Span}(g_1,\dots,g_N)$ such that $B_N$ is not a US for $V^\infty(g)$. 
\end{corollary}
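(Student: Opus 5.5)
The plan is to build the non-trivial element of $V^\infty(g_1,\dots,g_N)$ vanishing on $A_N$ (resp.\ $B_N$) so that its coefficient sequences factor as $c_{j,n}=a_j s_n$. Here $s_n$ is a fixed sequence ($s_n\equiv 1$ in the odd case, $s_n=(-1)^n$ in the even case) and $a=(a_1,\dots,a_N)\in\Cc^N\setminus\{0\}$. Then
$$f(x)=\sum_{j=1}^N\sum_{n\in\Z} a_j s_n g_j(x-n)=\sum_{n\in\Z}s_n g(x-n),\qquad g:=\sum_{j=1}^N a_j g_j\in{\rm Span}(g_1,\dots,g_N),$$
so $f\in V^\infty(g)$. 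It then remains to choose $a$ so that $f$ vanishes on the set and to check that $f\not\equiv0$, which forces $g\not\equiv 0$. This is essentially the mechanism I expect behind Theorem~\ref{t}, made explicit.

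\emph{Odd case (i).} Put $G_j(x):=\sum_{n}g_j(x-n)$, which converges in $W_0$ and is continuous, $1$-periodic and odd. For $f=\sum_j a_jG_j$ we get, for free, that $f$ is odd and $1$-periodic. Hence $f(0)=0$, and $f(1/2)=f(-1/2)=-f(1/2)$, so $f(1/2)=0$. Moreover, $f(x_i)=0$ implies $f(-x_i)=0$, and periodicity extends all zeros to $+\Z$. So $f|_{A_N}=0$ reduces to the $N-1$ homogeneous linear equations $\sum_{j}a_jG_j(x_i)=0$, $i=1,\dots,N-1$, in $N$ unknowns. This system has a non-trivial solution $a$.

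\emph{Even case (ii).} Put $G_j(x):=\sum_n(-1)^ng_j(x-n)$. Now $G_j$ is even and satisfies $G_j(x+1)=-G_j(x)$. Then $f(1/2)=f(-1/2)=-f(1/2)$ gives $f(1/2)=0$. The zeros at $\pm x_i$ and their integer translates again follow from $f(x_i)=0$ by evenness and antiperiodicity. We are again left with $N-1$ equations in $N$ unknowns, hence a non-trivial $a$. (Note that the point $0$ is absent from $B_N$; this is exactly why the constant choice $s_n\equiv1$ would fail here, since $f(1/2)$ would no longer vanish automatically.)

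\emph{Non-triviality, the only genuinely delicate step.} We must check that $f\not\equiv0$. The representation $f=\sum_j f_j$ with $f_j=a_jG_j\in V^\infty(g_j)$ has coefficient sequences $a_js_n\in\ell^\infty$. By the mutual independence condition this decomposition is unique, so $f\equiv0$ would force every $f_j\equiv0$. By the stability \eqref{eq:stability} of $g_j$ with $p=\infty$, we have $\|f_j\|_\infty\ge A\,|a_j|\,\|s\|_{\ell^\infty}=A|a_j|$, so every $a_j=0$, a contradiction. Hence $f\ne0$, and therefore $g\ne0$. So $A_N$ (resp.\ $B_N$) is not a US for $V^\infty(g)$.
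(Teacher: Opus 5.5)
Your proposal is correct and follows the paper's proof essentially step for step. The paper also uses the periodizations $F_j=\sum_n g_j(\cdot-n)$ in the odd case and $\Phi_j=\sum_n(-1)^n g_j(\cdot-n)$ in the even case, solves $N-1$ homogeneous equations in $N$ unknowns, and takes $g=\sum_j\alpha_j g_j$; your uniqueness-plus-stability argument for $f\not\equiv 0$ just spells out the paper's one-line claim that the $F_j$ are linearly independent. One wording fix: the periodization converges absolutely and uniformly on $\R$ by the $W_0$ bound, not in the $W_0$ norm, since a non-zero periodic function does not belong to $W_0$.
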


Using Lemma~\ref{lem:gabor_frames_uniquenes_sets}, we immediately get that the Gabor systems $\mathcal{G}(g, A_N\times\Z)$ and $\mathcal{G}(g, B_N\times\Z)$ do not form  frames for $L^2(\R)$ for windows $g$ from conditions $(i)$ and $(ii)$ of Corollary~\ref{cor:AN_g} respectively.

We now return to the result of Gr\"{o}chenig and Lyubarskii on Gabor systems generated by polynomials with Gaussian weight, mentioned in the introduction. In \cite{MR2529475}, it was proved that the condition
$
\alpha\beta<\frac{1}{N+1}
$
guarantees that the Gabor system
$
\mathcal{G}(p(x)e^{-a x^2},\alpha\Z\times\beta\Z)
$
forms a frame whenever $p$ is a polynomial of degree at most $N$ and $a>0$.

Below we show that this result is sharp within the class of polynomials of degree at most $N$ multiplied by an arbitrary Gaussian weight $e^{-a x^2}$, $a>0$.

\begin{corollary}\label{cor:sharp_hermite}
For every $a,\alpha, \beta>0$ and $N\in \N$ with $ \alpha \beta = \frac{1}{N+1},$ there exists a polynomial $p_N=p_N(a,\alpha,\beta)$ of degree at most $N$ such that the Gabor system $\mathcal{G}(p_N(x)e^{-ax^2}, \alpha\Z \times \beta\Z)$
 fails to form a frame in $ L^2(\R).$ Furthermore, $p_N$ can be chosen to be even or odd according to the parity of $N$.
\end{corollary}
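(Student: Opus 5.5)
The plan is to reduce the statement to Corollary~\ref{cor:AN_g} after a change of variables. First we normalize the lattice. Put $g(x)=p(x)e^{-ax^2}$ and $g_\beta(x)=g(\beta^{-1}x)$. A dilation is unitary on $L^2(\R)$ (up to a constant) and maps $\mathcal{G}(g,\alpha\Z\times\beta\Z)$ onto $\mathcal{G}(g_\beta,\alpha\beta\Z\times\Z)$, exactly as in the proof of Corollary~\ref{cor:lyub_nes}. Moreover $g_\beta(x)=\tilde p(x)e^{-\tilde a x^2}$ with $\tilde a=a/\beta^2$ and $\deg\tilde p=\deg p$, with the same parity. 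So it suffices to treat $\beta=1$, $\alpha=\frac{1}{N+1}$ and an arbitrary $a>0$. By Lemma~\ref{lem:gabor_frames_uniquenes_sets}, it is then enough to find a polynomial $p$ of degree at most $N$, of the correct parity, such that $\frac{1}{N+1}\Z$ is not a US for $V^\infty(p(x)e^{-ax^2})$.

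Next we choose the generators according to the parity of $N$, using Remark~\ref{rem:lattice}.

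If $N+1=2M$ is even (so $N$ is odd), take the $M$ odd generators $g_j(x)=x^{2j-1}e^{-ax^2}$ for $j=1,\dots,M$. Then $A_M=\frac{\Z}{2M}=\frac{\Z}{N+1}$, and Corollary~\ref{cor:AN_g}(i) gives a nontrivial odd $g\in\operatorname{Span}(g_1,\dots,g_M)$, that is, $g=p(x)e^{-ax^2}$ with $p$ odd of degree at most $2M-1=N$.

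If $N+1=2M-1$ is odd (so $N=2M-2$ is even), take the $M$ even generators $g_j(x)=x^{2j-2}e^{-ax^2}$ for $j=1,\dots,M$, of degree at most $N$. Remark~\ref{rem:lattice} gives $B_M=\frac{\Z+1/2}{2M-1}=\frac{1}{N+1}\Z+\frac{1}{2(N+1)}$. This is a translate of the lattice, and Lemma~\ref{lem:gabor_frames_uniquenes_sets} allows such translates. Corollary~\ref{cor:AN_g}(ii) then gives an even $p$ of degree at most $N$.

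The case $M=1$ (that is, $N=0$ or $N=1$) falls outside $N\ge2$ in Theorem~\ref{t}, so we handle it directly with Theorem~\ref{non_uniq_approx_lattice}. For $N=1$, $g=xe^{-ax^2}$ is odd, and $\Lambda_1=\{0,\frac12\}+\Z=\frac12\Z$. For $N=0$ the critical case $\alpha\beta=1$ is trivial: $\Gamma_1=\frac12+\Z$, with $g$ even.

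It remains to verify the hypotheses, namely stability of integer shifts and the mutual independence condition of Lemma~\ref{lem:mutual_stable}. I expect this to be the main obstacle. The Fourier transform of $x^m e^{-ax^2}$ equals $q_m(t)e^{-\pi^2t^2/a}$, where $q_m$ is a polynomial of exact degree $m$ (a scaled Hermite polynomial), and the $q_m$ used in each case share the parity of the $g_j$ and span all polynomials of that parity of degree at most $N$.

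For stability, each $q_m$ has finitely many zeros, so $\hat g_j(\xi+n)$ cannot vanish for all $n$.

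For linear independence at a fixed $\xi$, suppose $\sum_j c_j\hat g_j(\xi+m)=0$ for all $m\in\Z$. Then the polynomial $\sum_j c_jq_j$ vanishes at the infinitely many points $\xi+m$, so it is identically zero. Since the $q_j$ have distinct degrees, all $c_j=0$.

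This algebraic argument should be routine. The care needed is in matching the hypotheses of Corollary~\ref{cor:AN_g} precisely, and in checking that the constructed $p$ is nonzero and of the stated parity. Nontriviality is supplied by the corollary itself.
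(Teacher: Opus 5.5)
Your proposal is correct and follows essentially the same route as the paper. You rescale to $\frac{1}{N+1}\Z\times\Z$ and take the odd monomials $x^{2j-1}e^{-ax^2}$ or even monomials $x^{2j-2}e^{-ax^2}$. You check mutual independence via Lemma~\ref{lem:mutual_stable} with the argument that a nonzero polynomial times a Gaussian cannot vanish on a progression, and then apply Corollary~\ref{cor:AN_g} with the lattice choices of Remark~\ref{rem:lattice}. The only deviations are that you settle $N=1$ (and the superfluous $N=0$) directly via Theorem~\ref{non_uniq_approx_lattice} with $k=1$ instead of Corollary~\ref{cor:lyub_nes}, and that you spell out stability. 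Note that Lemma~\ref{lem:gabor_frames_uniquenes_sets} is applied to the combined window $g$, whose stability follows from the same fact that $\hat g$ is a nonzero polynomial times a Gaussian.
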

\begin{proof}
Fix $N\in \N, N \geq 2$ and denote by $\mathcal{P}_{N,a}$ the space of polynomials of degree at most $N$ multiplied by a Gaussian weight $e^{-ax^2}$.

    By substitution of variables, we are done if for every $a>0$ there is a $g\in\mathcal{P}_{N,a}$
    such that the system $\mathcal{G}(g, \frac{1}{N+1}\Z \times \Z)$ does not form a frame in $L^2(\R)$. By Lemma~\ref{lem:gabor_frames_uniquenes_sets}, it suffices to find a $g \in \mathcal{P}_{N,a}$ such that for some $s\in[0,1)$ the set $\frac{\Z}{N+1} + s$ is not a US for $V^{\infty}(g)$.

For $N=1$, the result is already known for $p(x)=x$, see~\cite{MR3027914}.
It also follows from Corollary~\ref{cor:lyub_nes} applied with $m=1$ to the odd window
$g(x)=xe^{-ax^2}$. Thus, in what
follows, we may assume that $N\geq 2$.

We will distinguish the cases of odd and even $N$.
    
    Suppose that $N$ is odd.
    Observe that the functions
    $$
    g_k(x) = x^{2k-1} e^{-ax^2} \in W_0, \quad k=1,\dots,\frac{N+1}{2},
    $$
    are odd and mutually independent. Indeed, the latter follows from Lemma~\ref{lem:mutual_stable}, since it is easy to check that the vectors
    $$ \{\hat g_k(m+\xi)\}_{m \in \Z}, \quad k=1,\dots, \frac{N+1}{2},$$ are linearly independent for every $\xi \in[0,1]$, since the $\sum_{k=1}^\frac{N+1}{2} a_k \hat g_k$ is a non-trivial polynomial with Gaussian weight and it cannot vanish on arithmetic progression.
    
    Choosing the nodes as in Remark~\ref{rem:lattice}, from part (i) of Corollary~\ref{cor:AN_g}, we deduce that there exist a non-trivial $g\in {\rm Span} (g_1,\dots, g_{\frac{N+1}{2}}) \subset \mathcal{P}_{N,a}$ such that $$A_{\frac{N+1}{2}} = \frac{\Z}{N+1} \text{ is not a US for } V^{\infty}(g).$$ This finishes the proof of the case $N\in 2\N-1.$

    The proof for even $N$ is somewhat similar: one can show that the space $\mathcal{P}_{N,a}$ contains $N/2 + 1$ mutually independent even functions
    $$
    g_k(x) = x^{2k-2} e^{-ax^2} \in W_0, \quad k=1,\dots, N/2+1.
    $$ 
    Then, it follows from part (ii) of Corollary~\ref{cor:AN_g} and Remark~\ref{rem:lattice} that the set $$B_{\frac{N}{2}+1}=\frac{\Z + \frac{1}{2}}{2(\frac{N}{2}+1) - 1} = \frac{\Z }{N+1} + \frac{1}{2(N+1)}$$ is not a US for $V^{\infty}(g)$ for some $g \in {\rm Span} (g_1, \dots, g_{\frac{N}{2}+1}) \subset \mathcal{P}_{N,a}$. This finishes the proof of the corollary.
\end{proof}

\section{Proof of Theorem~\ref{non_uniq_approx_lattice}}
\subsection{Odd generators}

Let us start with proving Theorem~\ref{non_uniq_approx_lattice} for generators $g$ satisfying $g(x)=-g(-x), \, x\in \R.  $ 
Fix some $k\in \N.$

We introduce auxiliary function
$$f_k(x):=\sum_{n\in\Z}g(x-kn)\in V^\infty(g).$$
In the next lemma we collect several properties of the function $f_k$.  

\begin{lemma}\label{lem:f_k_properties}
    For every $x\in \R$ the following equations are true:
\begin{enumerate}
    \item[$(A_1)$]  $f_k(x+k) = f_k(x),$
    \item[$(A_2)$] $ f_k(-x)=-f_k(x),$
    \item[$(A_3)$] $f_k(-x+k/2)= -f_k(x+k/2),$
    \item[$(A_4)$] $f_k(kn)=f_k(k/2+kn)=0, \, \, \text{ for every }n\in\Z.$
    \item[$(A_5)$] The functions $f_k(x), f_k(x-1),\dots,f_k(x-k+1)$ are linearly independent.
\end{enumerate}
\end{lemma}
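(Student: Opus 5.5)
The plan is to check $(A_1)$ through $(A_4)$ directly from the definition and the oddness of $g$, and to obtain $(A_5)$ from the stability assumption \eqref{eq:stability}. First note that $f_k\in V^\infty(g)$. Its coefficient sequence is $c_n=1$ for $n\in k\Z$ and $0$ otherwise, which lies in $\ell^\infty$. Since $g\in W_0$, the series converges absolutely and locally uniformly, so all rearrangements below are legitimate.

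For $(A_1)$, shift the summation index: $f_k(x+k)=\sum_n g(x-k(n-1))=f_k(x)$.

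For $(A_2)$, use oddness:
$$f_k(-x)=\sum_n g(-x-kn)=-\sum_n g(x+kn)=-f_k(x),$$
after replacing $n$ by $-n$.

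For $(A_3)$, combine the two previous properties:
$$f_k(-x+k/2)=-f_k(x-k/2)=-f_k(x-k/2+k)=-f_k(x+k/2).$$

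For $(A_4)$, set $x=0$ in $(A_2)$ to get $f_k(0)=0$. Set $x=0$ in $(A_3)$ to get $f_k(k/2)=0$. Periodicity $(A_1)$ then extends both identities to all $kn$ and $k/2+kn$.

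For $(A_5)$, suppose $\sum_{j=0}^{k-1} a_j f_k(x-j)\equiv 0$. Expanding gives
$$\sum_{j=0}^{k-1}\sum_{n\in\Z} a_j\, g(x-j-kn)\equiv 0.$$
This is an element of $V^\infty(g)$ with coefficient sequence $c_m=a_j$ whenever $m\equiv j \pmod k$. The sequence is bounded and each integer $m$ appears exactly once, because $m\mapsto (j,n)$ with $m=j+kn$, $0\le j<k$, is a bijection. The lower bound in \eqref{eq:stability} with $p=\infty$ gives
$$A\sup_m|c_m|\le \|f\|_{L^\infty}=0,$$
so every $a_j=0$.

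None of these steps is a real obstacle. The only point that needs care is $(A_5)$: one must recognise that stability holds at $p=\infty$ (which the paper grants for all $p$) and apply it to a non-$\ell^2$, periodic coefficient sequence. One must also confirm that rearranging the double sum into a single sum over $m\in\Z$ is allowed, which follows from absolute convergence guaranteed by $g\in W_0$.
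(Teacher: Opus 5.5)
Your proof is correct and matches the paper's argument step for step. As in the paper, you obtain $(A_1)$ and $(A_2)$ directly from the definition and the oddness of $g$, derive $(A_3)$ and $(A_4)$ by combining them (setting $x=0$), and prove $(A_5)$ by rewriting the linear combination as an element of $V^\infty(g)$ with a bounded $k$-periodic coefficient sequence and applying the stability bound at $p=\infty$.
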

\begin{proof}[Proof of the Lemma]
Relations $(A_1)$ and $(A_2)$ are obvious.
   Let us check $(A_3)$. Indeed, since $f_k$ is odd and $k$-periodic, 
$$f_k(-x+k/2)=-f_k(x-k/2)=-f_k(x-k/2+k)=-f_k(x+k/2).$$ Plugging $x = 0$ in $(A_2),(A_3)$ and using $(A_1)$, we get $(A_4).$ 

Finally, any non-trivial linear combination $f$ of the functions $f_k(\cdot-j)$ can be written as
$$f(x)=\alpha_1f_k(x)+\dots+\alpha_kf_k(x-k+1)=\sum_{n\in\Z}c_ng(x-n),\quad c_n=\alpha_j, n=km+j, m\in\Z.$$Therefore,  $f$ is a non-trivial element of $V^\infty(g)$, and 
condition $(A_5)$ follows from~\eqref{eq:stability} for $p=\infty$.
\end{proof}

Condition $(A_5)$ of the lemma above immediately yields the following corollary, which will be useful in what follows.
\begin{corollary}\label{claim:linear_indep}
\begin{enumerate} 
    \item[$(i)$] If $k$ is even then the family of functions
$$\mathcal{M}_e:=\{f_{k}(\,\cdot\,),f_k( \,\cdot-k/2)\}\cup\{f_k(\,\cdot-j)+f_k(\,\cdot-k+j)\}_{j=1}^{k/2-1}$$
is linearly independent.
    \item[$(ii)$]  If $k$ is odd then the family of functions
$$\mathcal{M}_o:=\{f_{k}(\,\cdot\,)\}\cup\{f_k(\,\cdot-j)+f_k(\,\cdot-k+j)\}_{j=1}^{(k-1)/2}$$
is linearly independent.
\end{enumerate}
\end{corollary}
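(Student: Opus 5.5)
The plan is to reduce the claim directly to condition $(A_5)$ of Lemma~\ref{lem:f_k_properties}. Each element of $\mathcal{M}_e$ or $\mathcal{M}_o$ is a linear combination of the $k$ functions $f_k(\cdot-j)$, $j=0,\dots,k-1$. So it suffices to show that the coefficient vectors of these elements, with respect to the basis $\{f_k(\cdot-j)\}_{j=0}^{k-1}$, are linearly independent in $\Cc^k$. Once that is shown, any vanishing linear combination of the elements of $\mathcal{M}_e$ (resp. $\mathcal{M}_o$) becomes a vanishing linear combination of the $f_k(\cdot-j)$. By $(A_5)$ all the resulting coefficients must be zero, and from this we read off that the original coefficients vanish.

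Concretely, for $k$ even suppose
$$\beta_0 f_k + \beta_{k/2} f_k(\cdot-k/2) + \sum_{j=1}^{k/2-1}\beta_j\bigl(f_k(\cdot-j)+f_k(\cdot-k+j)\bigr)=0.$$
For $1\le j\le k/2-1$ the indices $j$ and $k-j$ are distinct, lie in $\{k/2+1,\dots,k-1\}$ and $\{1,\dots,k/2-1\}$ respectively, and are different from $0$ and $k/2$. Hence the supports of these coefficient vectors are pairwise disjoint subsets of $\{0,\dots,k-1\}$. The coefficient of $f_k(\cdot-0)$ is therefore $\beta_0$, that of $f_k(\cdot-k/2)$ is $\beta_{k/2}$, and that of $f_k(\cdot-j)$ is $\beta_j$ for $1\le j\le k/2-1$. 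By $(A_5)$ all of them vanish.

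For $k$ odd the argument is identical. The pairs $\{j,k-j\}$, $1\le j\le (k-1)/2$, together with $\{0\}$, partition $\{0,\dots,k-1\}$, because $j\neq k-j$ when $k$ is odd. The vectors again have disjoint nonempty supports, and $(A_5)$ gives the result.

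There is no real obstacle here. The only point needing care is the bookkeeping that the index pairs $\{j,k-j\}$ are disjoint, do not collapse, and stay inside $\{0,\dots,k-1\}$. This is exactly why the range stops at $k/2-1$ in the even case: at $j=k/2$ one would have $j=k-j$. Disjoint nonempty supports immediately give linear independence.
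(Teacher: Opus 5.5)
Your argument is correct and is exactly what the paper intends when it says the corollary follows immediately from $(A_5)$: the elements of $\mathcal{M}_e$ and $\mathcal{M}_o$ have pairwise disjoint nonempty supports with respect to the linearly independent family $\{f_k(\cdot-j)\}_{j=0}^{k-1}$. One minor slip in your bookkeeping: for $1\le j\le k/2-1$, it is $j$ that lies in $\{1,\dots,k/2-1\}$ and $k-j$ that lies in $\{k/2+1,\dots,k-1\}$, not the other way round.
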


Now, we are ready to pass to the proof of the main theorem. 
\begin{proof}[Proof of Theorem~\ref{non_uniq_approx_lattice}, part $(i)$]
Recall that we are given some $k \in \N$, the sets $\Lambda_k$ and $F_{\lfloor\frac{k}{2}\rfloor}$ defined in~\eqref{eq:set_F_def} and~\eqref{eq:set_Lambda_def}. 
There are two possibilities:

{\bf Case $k \in 2\N$.} Then $$F_{k/2}  =\{\pm x_1, \pm x_2, \dots, \pm x_{k/2}\} \subset \left(-\frac{k}{2},\frac{k}{2}\right) \setminus \{0\}.$$
To prove that $\Lambda_k$ is not a US for $V^{\infty}(g)$, it suffices to find a $k$-periodic function $f$ from $V^{\infty}(g)$ such that
$$f(0) = f(k/2) = f(x_j) = f(k-x_j) = 0 \quad \text{ for every } j = 1, \dots, k/2.$$

Note that the cardinality of the set $\mathcal{M}_e$ from Lemma~\ref{claim:linear_indep} is equal to $k/2+1.$
Therefore, we can solve the system of linear equations and find $k/2+1$ coefficients $a_j$ such that the function 
\begin{equation}\label{eq:f_def_aj}
  f(x)=a_0 f_k(x)+\sum_{j=1}^{k/2-1}a_j(f_k(x-j)+f_k(x-k+j))+a_{k/2}f_{k}(x-k/2),  
\end{equation}
is not identically zero, belongs to $V^{\infty}(g),$ and satisfies
\begin{equation}\label{eq:f_vanishes_F}
    f(x_j)=0, \quad j=1,...,k/2.
\end{equation}

By equation $(A_1)$ from Lemma~\ref{lem:f_k_properties},  we have \begin{equation}\label{eq:f_is_k_periodic}
 f(x+k)\equiv f(x).   
\end{equation} 
Using $(A_1)$ and $(A_2)$, we observe that
$$f_k(-x-j)+f_k(-x-k+j)=-(f_k(x+j)+f_k(x+k-j))=-(f_k(x-k+j)+f_k(x-j)).$$ Together with $(A_2)$ and $(A_3)$, this implies $f(-x)=-f(x),$ whence
\begin{equation}
    f(0)=f(k/2)=0, \quad \text{and } \quad  \quad f(k-x_j) =f(-x_j) = - f(x_j) = 0. 
\end{equation}
This finishes the proof of the case $k \in 2\Z.$

{\bf Case $k \in 2\N+1$.}  It suffices to find a $k$-periodic function $f$ from $V^\infty(g)$ that vanishes on $\{0, k/2\} \cup F_{\frac{k-1}{2}}.$ Similarly to the previous case, since the cardinality of $\mathcal{M}_o$ is equal to $(k+1)/2$, one can find non-trivial coefficients $a_0, \dots, a_{(k-1)/2}$ such that the function 
$$f(x)=a_0 f_k(x)+\sum_{j=1}^{(k-1)/2}a_j(f_k(x-j)+f_k(x-k+j))$$ vanishes on $\{x_1, \dots, x_{\frac{k-1}{2}}\}.$ Clearly, $f\in V^{\infty}(g).$
Using equations $(A_1)-(A_4)$ from Lemma~\ref{lem:f_k_properties}, one can check that $f(0)=f(k/2)=0$ and $f(k-x)=-f(x).$
We leave the details to the reader.

Finally, we note that if $k=1$, then $F_0=\emptyset$, and the function $f_1$ itself satisfies
$f_1(0)=f_1(1/2)=0$ by $(A_4)$. 
\end{proof}

\subsection{Even generators}
Throughout this subsection, we assume that $g$ satisfies the assumptions of Theorem~\ref{non_uniq_approx_lattice} and $g(x)=g(-x)$ for every $x\in \R.$ 
Fix $k \in \N$ and define 
$$
\varphi_k(x) = \sum\limits_{n \in \Z} (-1)^n g(x-kn),
$$
$$f_k(x):=\sum_{n\in\Z}g(x-kn).$$
Clearly, both $f_k$ and $\varphi_k$ belong to $V^\infty(g)$. Using the fact that $g$ is even and arguing as in the proof of Lemma~\ref{lem:f_k_properties}, one obtains the following properties of the functions $f_k$ and $\varphi_k$.

\begin{lemma}\label{lem:fk_phi_k} For every $x\in\R$ and $n\in\N$ the following conditions are satisfied:
    \begin{enumerate}
    \item[$(B_1)$] $\varphi_k(x+k) = -\varphi_k(x)$, $\varphi_k(x+2k) = \varphi_k(x),$
    \item[$(B2)$] $\varphi_k(-x) = \varphi_k(x),$
    \item[$(B_3)$] $\varphi_k(-x+k/2) = -\varphi_k(x+k/2),$
    \item[$(B_4)$] $\varphi_k(k/2) = \varphi_k(k/2 + kn) = 0$.
  \item[$(B_5)$] The functions $\varphi_k(x),\varphi_k(x-1),\dots,\varphi_k(x-k+1)$ are linearly independent.
\end{enumerate}
\begin{enumerate}
    \item[$(C_1)$] $f_k(x+k) = f_k(x),$
    \item[$(C_2)$] $f_k(-x) = f_k(x).$
    \item[$(C_3)$] The functions $f_k(x),f_k(x-1),\dots,f_k(x-k+1)$ are linearly independent.
\end{enumerate}
\end{lemma}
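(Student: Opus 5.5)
The plan mirrors the proof of Lemma~\ref{lem:f_k_properties}. Throughout, we use that $g\in W_0$ is even and has stable integer shifts. Since $g\in W_0$, both series defining $\varphi_k$ and $f_k$ converge absolutely and locally uniformly. This allows us to reindex summations freely.

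\textbf{Periodicity and symmetry.} For $(B_1)$ we substitute $n\mapsto n+1$:
$$\varphi_k(x+k)=\sum_n(-1)^n g(x-k(n-1))=\sum_m(-1)^{m+1}g(x-km)=-\varphi_k(x).$$
Applying this twice gives $2k$-periodicity, and the same substitution without the sign gives $(C_1)$. For $(B_2)$ and $(C_2)$ we use evenness of $g$ and the substitution $n\mapsto -n$, noting that $(-1)^{-n}=(-1)^n$:
$$\varphi_k(-x)=\sum_n(-1)^n g(x+kn)=\sum_m (-1)^m g(x-km)=\varphi_k(x).$$
For $(B_3)$ we combine these in the same way as for $(A_3)$:
$$\varphi_k(-x+k/2)=\varphi_k(x-k/2)=-\varphi_k(x-k/2+k)=-\varphi_k(x+k/2).$$
Setting $x=0$ in $(B_3)$ gives $\varphi_k(k/2)=-\varphi_k(k/2)$, hence $\varphi_k(k/2)=0$. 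Then $(B_1)$ yields $\varphi_k(k/2+kn)=\pm\varphi_k(k/2)=0$ for every $n$, which is $(B_4)$.

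\textbf{Linear independence.} For $(B_5)$, take a combination $\sum_{j=0}^{k-1}\alpha_j\varphi_k(x-j)$. It equals $\sum_{n\in\Z}c_ng(x-n)$ with $c_{km+j}=(-1)^m\alpha_j$. This coefficient sequence is bounded, and it is nonzero whenever some $\alpha_j\neq0$, with $\|c\|_{\ell^\infty}=\max_j|\alpha_j|$. By the lower inequality in~\eqref{eq:stability} with $p=\infty$, the function is then not identically zero. This proves independence. The argument for $(C_3)$ is the same, now with $c_{km+j}=\alpha_j$, exactly as in $(A_5)$.

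\textbf{Main obstacle.} Nothing here is hard. The only point needing care is the bookkeeping of signs: in $(B_1)$ the shift by $k$ flips $(-1)^n$, while in $(B_2)$ the reflection $n\mapsto-n$ leaves it unchanged. In $(B_5)$ one must observe that the alternating signs do not destroy nontriviality of the coefficient sequence, which holds because $|c_{km+j}|=|\alpha_j|$.
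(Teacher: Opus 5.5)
Your proof is correct and follows essentially the same route as the paper, which simply asserts that these properties follow from the evenness of $g$ by arguing as in Lemma~\ref{lem:f_k_properties}. Your argument matches that template: reindex the series for $(B_1)$--$(B_3)$ and $(C_1)$--$(C_2)$, set $x=0$ in $(B_3)$ and use $(B_1)$ for $(B_4)$, and apply the lower stability bound with $p=\infty$ to the coefficients $c_{km+j}=(-1)^m\alpha_j$ (respectively $c_{km+j}=\alpha_j$) for $(B_5)$ and $(C_3)$.
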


\begin{corollary}\label{lem:phi_f_linear_independence} The following statements are true.
\begin{enumerate} 
    \item[$(i)$] If $k$ is odd then the family
    $$\{\varphi_k(\,\cdot\, )\}\cup \{\varphi_k(\,\cdot\,- j)- \varphi_k(\,\cdot\,-k + j)\}_{j=1}^{(k-1)/2}$$
is linearly independent. 
    \item[$(ii)$]
    If $k$ is even then the family
    $$\{f_k(\,\cdot\, ), f_k(\,\cdot\, -k/2)\}\cup \{f_k(\,\cdot\,- j)+ f_k(\,\cdot\,-k + j)\}_{j=1}^{k/2-1}$$
is linearly independent. 
       
\end{enumerate}
   
\end{corollary}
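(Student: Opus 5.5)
The plan is to derive both statements directly from the linear independence properties $(B_5)$ and $(C_3)$ of Lemma~\ref{lem:fk_phi_k}, exactly as Corollary~\ref{claim:linear_indep} follows from $(A_5)$. The key observation is that each family in question consists of linear combinations of the shifts $\varphi_k(\cdot-j)$ (respectively $f_k(\cdot-j)$) with $j\in\{0,1,\dots,k-1\}$, and that distinct members of the family involve disjoint sets of indices $j$. Linear independence of the family then reduces to linear independence of the underlying $k$ shifts.

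For $(ii)$ ($k$ even), take a vanishing combination
$$a_0f_k(x)+a_{k/2}f_k(x-k/2)+\sum_{j=1}^{k/2-1}a_j\bigl(f_k(x-j)+f_k(x-k+j)\bigr)\equiv 0 .$$
The index sets $\{0\}$, $\{k/2\}$ and $\{j,k-j\}$ for $1\le j\le k/2-1$ are pairwise disjoint and contained in $\{0,\dots,k-1\}$. Hence this is a combination of $f_k(\cdot),\dots,f_k(\cdot-k+1)$ in which the coefficient of $f_k(\cdot-j)$ is $a_0$, $a_{k/2}$, or $a_j$ according to which index set $j$ belongs to. By $(C_3)$ all of these coefficients vanish, so every $a_j=0$.

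For $(i)$ ($k$ odd), the argument is the same with $\varphi_k$ and $(B_5)$. The index sets are now $\{0\}$ and $\{j,k-j\}$ for $1\le j\le (k-1)/2$; these are disjoint because $j\ne k-j$ when $k$ is odd. The coefficient of $\varphi_k(\cdot-j)$ is $a_j$ and that of $\varphi_k(\cdot-k+j)$ is $-a_j$. Since all these coefficients must vanish, every $a_j=0$.

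There is no real obstacle here. The only points to check are the disjointness of the index sets, which uses $j\neq k-j$ in the stated ranges, and that all indices lie in $\{0,\dots,k-1\}$. Shifts outside this range would require $(B_1)$ or $(C_1)$ to reduce them, which would introduce a sign for $\varphi_k$, but no such shifts occur here.
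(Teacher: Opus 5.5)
Your proof is correct and follows the paper's intended route: the paper gives no separate proof, treating the corollary as an immediate consequence of $(B_5)$ and $(C_3)$, just as Corollary~\ref{claim:linear_indep} follows from $(A_5)$. Your argument supplies exactly the needed observation, namely that the family members are combinations of $\varphi_k(\cdot-j)$ (resp.\ $f_k(\cdot-j)$) over pairwise disjoint index sets contained in $\{0,\dots,k-1\}$.
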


\begin{proof}[Proof of Theorem~\ref{non_uniq_approx_lattice}, part $(ii)$] We distinguish between the cases $k$ even and $k$ odd.

{\bf Case $k\in 2 \N-1$.}
Recall that we are given the set $F_{\frac{k-1}{2}} = \{\pm x_1, \dots, \pm x_{(k-1)/2}\}$ and our goal is to find a function $f$ that vanishes on $\Gamma_k=\left(F_{\frac{k-1}{2}} \cup  \{k/2\}\right) + k \Z.$ 

By Lemma~\ref{lem:phi_f_linear_independence}, part $(i)$, we can solve the system of linear equations and find coefficients $a_0, \dots, a_{(k-1)/2}$ such that the function
\begin{equation*}
    f(x) = a_0\varphi_k(x) + \sum\limits_{j=1}^{(k-1)/2} a_{j} \left( \varphi_k(x-j) - \varphi_k(x-k+j) \right).
\end{equation*}
is non-trivial, belongs to $V^{\infty}(g),$ and  $f(x_j) = 0$ for any $j=1, \dots,{(k-1)/2}.$ 

The relation $(B_1)$ from Lemma~\ref{lem:fk_phi_k} yields 
\begin{equation}\label{eq:f_k_minus}
f(x+k) = - f(x).    
\end{equation}

Using $(B_2)$ and $(B_4)$, we get
\begin{equation}\label{eq:f_k2}
    f(k/2) =  a_0 \varphi_k(k/2) + \sum\limits_{j=1}^{(k-1)/2} a_{j} \left( \varphi_k(k/2-j) - \varphi_k(-k/2+j) \right) = 0.
\end{equation}

In addition, using $(B_1)$ and $(B_2)$, we deduce that 
$$
f(k-x_m) = a_0\varphi_k(k-x_m) + \sum\limits_{j=1}^{(k-1)/2} a_{j} \left( \varphi_k(k-x_m-j) - \varphi_k(-x_m+j) \right)=-f(x_m)=0,
$$
for every $m=1,\dots,(k-1)/2$.
Together with~\eqref{eq:f_k_minus} and~\eqref{eq:f_k2}, this implies that $$f \,\, \text{vanishes on } \Gamma_k=\left(F_{\frac{k-1}{2}} \cup \{k/2\}\right) + k \Z. $$

{\bf Case $k\in 2 \N$.}
By part $(ii)$ of Lemma~\ref{lem:phi_f_linear_independence},  we can find coefficients $a_0, \dots, a_{k/2}$ such that the function
\begin{equation}
    f(x) = a_0 f_k(x) + \sum\limits_{j=1}^{k/2-1} a_{j} \left( f_k(x-j) + f_k(x-k+j) \right)+ a_{k/2}f_k(x-k/2)
\end{equation}
vanishes on $\{x_1, \dots, x_{k/2}\}.$ Clearly, $f\in V^{\infty}(g).$
For every $m=1,\dots,k/2$ we have
$$
f(-x_m)= a_0 f_k(-x_m) + \sum\limits_{j=1}^{k/2-1}  a_{j} \left( f_k(-x_m-j) + f_k(-x_m-k+j) \right)+ a_{k/2}f_k(k/2-x_m),
$$
and using conditions $(C_1)$ and $(C_2)$ from Lemma~\ref{lem:fk_phi_k}, we see that $$f(-x_m)=f(x_m)=0, \quad m=1,\dots,k/2.$$ From $k$-periodicity of the functions $f_k$, we deduce that $f$ vanishes on $\Theta_k=F_{k/2}+k\Z$.
This finishes the proof of Theorem~\ref{non_uniq_approx_lattice}.

 \end{proof}

\section{Proofs of Theorem~\ref{t} and Corollary~\ref{cor:AN_g} }

We prove the statements of Theorem~\ref{t} and Corollary~\ref{cor:AN_g} simultaneously.

$(i)$. Assume that every $g_j$ is an odd function.

Set $$F_j(x):=\sum_{n\in\Z}g_j(x-n), \quad j=1,\dots,N.$$
We leave to the reader to check that every $F_j$ is a $1$-periodic function satisfying 
\begin{equation}\label{eq:F_j_odd_proprties}
    F_j(-x)=-F_j(x), \, x\in\R, \quad \text{and} \quad F_j(0)=F_j\left(\frac{1}{2}\right)=0.
\end{equation}
Furthermore, by mutual linear independence of the windows $g_1, \dots, g_{N},$ we see that the functions $F_j$ are also linearly independent.

Now, given any set $A_N$, one may find a non-trivial linear combination
\begin{equation}\label{eq:F_linear_combination}
F(x)=\sum_{j=1}^N\alpha_j F_j(x), \quad \alpha_j \in \Cc,    
\end{equation}
that vanishes on the set $\{x_1,\dots,x_{N-1}\}$. Every $F_j$ is a $1$-periodic function, and so is $F$.
From~\eqref{eq:F_j_odd_proprties} we get $F(-x)=-F(x)$ and $F(1/2)=0$, from which the statement (i) of Theorem~\ref{t} follows. 

Rewriting~\eqref{eq:F_linear_combination} as
$$F(x)=\sum_{j=1}^N\alpha_j F_j(x) = \sum\limits_{n\in\Z}\sum_{j=1}^N\alpha_j g_j(x-n), \quad \alpha_j \in \Cc,$$
we see that $F\in V^{\infty}(g),$ where $g=\sum\limits_{j=1}^N \alpha_j g_j$ is a non-trivial function, and part $(i)$ of Corollary~\ref{cor:AN_g} also follows. 

$(ii)$. If every $g_j$ is an even function, the proof is similar, where one uses the functions $$ \Phi_j(x):=\sum_{n\in\Z}(-1)^ng_j(x-n), \quad j=1,\dots,N, $$  instead of $F_j$. Note that $\Phi_j$ are $2$-periodic functions that satisfy
$$
\Phi_j(x+1) =- \Phi_j(x), \quad \Phi_j(-x) = \Phi_j(x), \quad \Phi_j\left(\frac{1}{2}\right) = 0.
$$
We leave the details to the reader.

\section{Acknowledgments}
The authors thank Aleksei~Kulikov and Denis~Zelent for several helpful comments that improved the manuscript.

Ilya Zlotnikov was supported by Grant 334466 from the Research Council of Norway and by the Erwin Schrödinger International Institute for Mathematics and Physics (ESI), Vienna, through the Research in Teams project “Gabor Frames in Higher Dimensions.”

\bibliographystyle{abbrv}

\end{document}